\def\serieslogo@{}
\def\@setcopyright{}
\def\ep{\epsilon}
\newtheorem{thm}{Theorem}[section]
\newtheorem{lem}[thm]{Lemma}
\theoremstyle{definition}
\theoremstyle{remark}
\newtheorem{rem}{Remark}[section]
\numberwithin{equation}{section}
\begin{document}
\title[Recovery of high frequency wave fields]{Recovery of high frequency wave fields for the acoustic wave equation }
%convergence of the Gaussian beam solutions] {Global convergence of the Gaussian beam superposition to
%high frequency wave fields
\author{Hailiang Liu and James Ralston}
\address{Iowa State University, Mathematics Department, Ames, IA 50011} \email{hliu@iastate.edu}
\address{UCLA, Mathematics Department, Los Angeles, CA 90095} \email{ralston@math.ucla.edu}

\keywords{High frequency waves, Gaussian beams, phase space, superposition}
\date{June 15, 2009}

\begin{abstract} Computation of high frequency solutions to wave equations is important in many applications, and notoriously difficult in resolving wave oscillations. Gaussian beams are asymptotically valid high frequency solutions
concentrated on a single curve through the physical domain, and superposition of Gaussian beams provides a powerful tool to generate more general high frequency solutions to PDEs. An alternative way to compute Gaussian beam components such as phase, amplitude and Hessian of the phase, is to capture them in phase space
by solving Liouville type equations on uniform grids. Following \cite{LR:2009} we present a systematic construction of  asymptotic high frequency wave fields from computations in phase space for acoustic wave equations; the superposition of phase space based Gaussian beams over two moving domains is shown necessary. Moreover, we prove that the $k$-th order Gaussian beam superposition converges to the original wave field in the energy norm, at the rate of  $\ep^{\frac{k}{2}+\frac{1-n}{4}}$ in dimension $n$.
\end{abstract}

\maketitle

\bigskip

\tableofcontents

\section{Introduction}This is the continuation of our project, initiated in  \cite{LR:2009}, of developing a rigorous recovery theory for high frequency
wave fields from phase space based computations. Here we focus on
the wave equation
\begin{align}\label{pp}
Pu:=[\partial_t^2 -c(x)^2\Delta]u  =0, \quad (x, t)\in \mathbb{R}^n \times \mathbb{R},
\end{align}
where $c(x)$ is a positive smooth function, with highly oscillatory
initial data
\begin{equation}\label{ini}
u(x, 0)=A_{\rm in}(x,\ep) e^{iS_{\rm in}(x)/\ep}, \quad u_t(x,
0)=B_{\rm in}(x,\ep) e^{iS_{\rm in}(x)/\ep}.
\end{equation}
The initial phase $S_{\rm in }\in C^\infty(\mathbb{R}^n)$, and the
amplitudes $A_{\rm in}, B_{\rm in} \in C_0^\infty(\mathbb{R}^n)$
have the following asymptotic expansions:
\begin{align}\label{ae}
A_{\rm in}: &=A^{(0)}_{\rm in}(x)+\ep A^{(1)}_{\rm in}(x)+\ep^2
A^{(2)}_{\rm in}(x)+\cdots,
\\ \label{be} B_{\rm in}: &=\ep^{-1} B^{(-1)}_{\rm in}(x) +
B^{(0)}_{\rm in}(x) + \ep B^{(1)}_{\rm in}(x)+\cdots.
\end{align}
The small parameter $\ep$ represents the typical wave length of oscillations of the initial data. Propagation of oscillations of wave
 length $\ep$ causes mathematical and numerical challenges in solving high frequency wave propagation problems.

  In this article we are
 interested in the construction
of globally valid asymptotic wave fields and the analysis of their
convergence to the true solutions of the initial value problem. A
general discussion of this problem and background references are
given in the introduction to \cite{LR:2009}. We have two
objectives:
\begin{itemize}
\item[i)]  to present the construction of asymptotic solutions as superpositions over phase space;
 \item[ii)] to estimate the difference between the exact wave fields and the asymptotic ones.
\end{itemize}
The construction for (i) is based on Gaussian beams (GB) in
physical space constructed similarly to those given  for wave
equations in \cite{Tan08},
 but here the construction is carried out by solving inhomogeneous Liouville equations in phase space.
 While the result is no longer a superposition of asymptotic
 solutions to the wave equation (1.1), the superposition is nonetheless asymptotic.
  We consider {\it superpositions over two subdomains} moving with two Hamiltonian flows,
 respectively, and show that they are
asymptotic solutions by relating them to the {\it
Lagrangian superposition} through two
time-dependent symplectic changes of variables.
An argument of this type was used for the
Helmholtz equation in \cite{LQB:2007}.

For (ii), as in \cite{Tan08}, we use the well-posedness theory for
(1.1), i.e. the continuous dependence of solutions of $P\psi=f$ on
their initial data and $f$. Thus, the sources of error in the
Gaussian beam superposition for the initial value problem are the
error in approximating the initial data and the error in solving
the PDE. There are some differences between the acoustic wave
equation and the Schr\"{o}dinger wave equation. For example, the
caustics that can form are weaker.
%the damage done by caustics for the acoustic wave equation is relatively small %in $\ep$ when measured in the energy norm.
%We show that the damage done by caustics when measured in $L^2$ norm is of $\ep^{-n/4}$ for the Schr\"{o}dinger %equation,
%but $\ep^{(1-n)/4}$ for the wave equation.

In summary, our phase space based Gaussian beam superposition is expressed as
\begin{equation}\label{sol1}
u^\epsilon(t, y)=Z(n,\epsilon)\left[ \int_{\Omega^+(t)}u_{PGB}^+(t,
y,X)dX +\int_{\Omega^-(t)}u_{PGB}^-(t,
y,X)dX \right],
\end{equation}
where $X=(x, p)$ denotes variables in phase space $\mathbb{R}^{2n}$, $\Omega(0)$ is the domain where
 we construct initial Gaussian beams from the given data, and $\Omega^\pm(t)$ is the image of $\Omega(0)$ under the Hamiltonian flow for
  $H(x, p)=\pm c(x)|p|$. The functions $u_{PGB}^\pm(t,y,X)$ are constructed using the phase space based Gaussian beam {\it Ansatz},
  and $Z(n ,\ep) \sim \ep^{-n/2}$ is a normalization parameter. Our result shows that for the $k-$th order
  phase space Gaussian beam superposition, the following estimate holds
\begin{equation}\label{est1}
\|(u^\epsilon -u)(t, \cdot)\|_{E} \lesssim\|(u^\epsilon(0,
\cdot)-u_{\rm in}(\cdot) \|_{E} + |\Omega(0)|
\ep^{\frac{k}{2}+\frac{1-n}{4}},
\end{equation}
where $\|e\|^2_{E}:=\frac{\ep^2}{2}\int_{\mathbb{R}^n}
[c^{-2}|e_t|^2 +|\nabla_x e|^2]dx$. Here and in what follows we use
$A \lesssim B$ to denote the estimate $A\leq CB$ for a constant $C$
which is independent of  $\ep$.

For the initial data of the form $(A_{\rm in}(x,\ep), B_{\rm
in}(x,\ep))e^{iS_{\rm in}(x)/\ep}$ we need a superposition over an
n-dimensional submanifold of phase space. The asymptotic solution
is then represented as
\begin{equation}\label{sol2}
u^\epsilon(t, y)=Z(n,\epsilon)\left[ \int_{\Omega^+(t)}u_{PGB}^+\delta(w^+)dX +\int_{\Omega^-(t)}u_{PGB}^-\delta(w^-)dX \right],
\end{equation}
where $w^\pm$ is obtained from the Liouville equation
$$
\partial_t w+H_p\cdot \nabla_x w-H_x \cdot \nabla_p w=0, \quad w(0, X)=p-\nabla_x S_{\rm in}(x),
$$
with $H(x, p)=\pm c(x)|p|$, respectively. Our result shows that
\begin{equation}\label{est2}
\|(u^\epsilon -u)(t, \cdot)\|_{E} \lesssim  \ep^{\frac{k}{2}+\frac{1-n}{4}}.
\end{equation}
Here the exponent $k/2$ reflects the accuracy of the Gaussian beam in solving the PDE. It will increase when one uses more accurate beams.
The exponent $\frac{1-n}{4}$ indicates the damage done by the caustics.

We now conclude this section by outlining the rest of this paper:
in Section 2 we start with Gaussian beam solutions in physical
space, and define the phase space based GB {\it Ansatz}  through
the Hamiltonian map. Section 3 is devoted to a recovery scheme
through superpositions over two moving domains.  The total error is
shown to be bounded by an initial error and the evolution error of
order $\ep^{(3-n)/4}$. Control of initial error is discussed in
Section 4.  Convergence rates are obtained for first order GB
solutions in Section 5. In Section 6 we present an example to
illustrate these constructions. Extensions to higher order GB
approximations are given in Section 7.

\section{Phase space based Gaussian beam Ansatz}
%\subsection{First order Gaussian beam solutions}
As is well known, the idea underlying Gaussian
beams \cite{Ralston05} is to build asymptotic
solutions concentrated on a single ray path in
$\mathbb{R}_t\times \mathbb{R}^n_x$. This means
that, given a ray path $\gamma$ parameterized by
$(t,x(t))$, one makes the ansatz
\begin{equation}\label{an}
u^\epsilon(t, y)=A(t, y,\ep)e^{i\Phi(t,
y)/\epsilon},
\end{equation}
where $\Phi(t, x(t))$ is real, and $Im\{\Phi(t, y)\} > 0$ for $y
\not= x(t)$. The amplitude is allowed to be complex and has an
asymptotic expansion in terms of $\ep$:
$$
A(t, y,\ep)=A_0(t, y) +\ep A_1(t, y)+\cdots+\ep^N
A_N(t, y).
$$
We wish to build asymptotic solutions to $Pu(t, y) = 0$, i.e., we
want $P u^\epsilon = O(\epsilon^{M})$. Substituting from (\ref{an})
and grouping terms multiplied by the same power of $\ep$, we obtain
the equations of geometric optics:
\begin{equation}\label{pap}
   P[A(t, y,\ep)e^{i\Phi(t, y)/\ep}]
   =\left( \sum_{j=-2}^{N} c_j(t, y)\ep^j \right)e^{i\Phi(t, y)/\ep},
\end{equation}
where for $ G(t, y)=|\partial_t \Phi|^2 -c^2|\nabla_y \Phi|^2$,
\begin{align*}
c_{-2}(t, y) &= - G(t, y)A_0,\\
c_{-1}(t, y) &=2i LA_0 +  G(t, y) A_1,\\
c_{l-1}(t, y) &=2i L A_{l}+ G(t, y)A_{l+1} + P[A_{l-1}], \quad l=1,
\cdots N-1.
\end{align*}
Here $L$  is the linear differential operator,
$$
L= \Phi_t \partial_t -c^2 \nabla_y \Phi \cdot \nabla_y   +
\frac{1}{2}P[\Phi].
$$
Since  $e^{i\Phi/\ep}$ decays rapidly away from $\gamma$, to make
$P(Ae^{i\Phi/\ep})= O(\ep^M)$ for a given $M\in \mathbb{Z}$, we
only need to make $c_j$ vanish on $\gamma$ to sufficiently high
order. In this work we discuss mainly the lowest order Gaussian
beam solutions, followed by an extension to higher order Gaussian
beam superpositions in Section 7.

We begin with $c_{-2}=0$, i.e. $ G=0$. This leads to two eikonal
equations
\begin{equation} \label{so}
\partial_t \Phi +H(x, \nabla_x \Phi)=0, \quad H(x, p)=\pm c(x)|p|.
\end{equation}
The leading amplitude solves
\begin{equation}\label{aa}
\partial_t A + H_p\cdot \nabla_x A= \frac{AP[\Phi]}{2H(x, \nabla_x \Phi)}.
\end{equation}
We continue to denote the phase space variable as $X=(x, p)$, and
let  $X_0=(x_0, p_0)$ denote the initial state. Then the equations
for the bicharacteristics  $X=X^\pm (t, X_0)$ originating from
$X_0$ at $t = 0$ are
\begin{equation}\label{x}
    \frac{d}{dt}X(t, X_0)=V(X(t, X_0)), \quad X(0, X_0)=X_0.
\end{equation}
The vector field $V=(H_p, -H_x)$ is divergence free, and hence this
flow preserves the volume on phase space.

From now on we include the initial data $X_0$ as
a parameter in the phase:  $\Phi=\Phi(t, y; X_0)$
and the amplitude:  $A=A(t, y; X_0)$. We apply
Taylor expansion of the phase $\Phi$ and the
amplitude $A$ about $x=x(t, X_0)$ to obtain
\begin{equation}\label{phi0}
  \Phi(t,y; X_0)=S(t; X_0)+p(t, X_0)(y-x(t, X_0))
+\frac{1}{2}(y-x(t, X_0))^\top M(t;X_0)(y-x(t, X_0)),
\end{equation}
with $p(t, X_0)=\partial_y \Phi(t, x(t, X_0); X_0)$ and
$$
S(t;X_0)=\Phi(t,x(t, X_0); X_0), \quad M(t;X_0)=\partial_y^2
\Phi(t, x(t, X_0); X_0).
$$
For the amplitude we set $A(t, y; X_0)=A(t;X_0)$  with $A(t; X_0)=A(t,x(t,
X_0);X_0).$  Then we get the equations along the curve $\gamma$ for $S$
\begin{align}\label{s}
\frac{d}{dt} S(t; X_0)=0, \quad S(0; X_0)=S_{\rm in}(x_0),
\end{align}
and the Hessian $M$
\begin{equation}\label{m}
\frac{d}{dt}M(t; X_0) + H_{xx}+H_{xp}M +MH_{px}+ MH_{pp}M=0, \quad
 M(0; X_0)=M_{\rm in}(x_0).
\end{equation}
Using the eikonal equation $\partial_t \Phi +H(x, \nabla \Phi)=0$
twice, we see that
$$
P[\Phi]=\partial_t[-H(x, \nabla \Phi)]-c^2\Delta \Phi =H_p\cdot H_x +H_pMH_p -c^2 Tr(M).
$$
This with (\ref{aa}) shows that the amplitude along the ray, $
A(t;X_0)$, satisfies
\begin{equation}
\label{a}
\frac{d}{dt} A(t;X_0)  =\frac{A}{2H}\left[ H_p\cdot H_x +H_pMH_p -c^2 Tr(M)\right], \quad A(0; X_0)=A_{\rm in}(x_0).
\end{equation}
We have introduced this form of the transport equation because it
is easier to translate to  Eulerian coordinates. The essential idea
behind the Gaussian beam method is to choose some complex Hessian
$M_{\rm in}$ initially so that $M$ remains bounded for all time,
and its imaginary part is positive definite. Equation (\ref{a})
shows that the amplitude $A(t;X_0)$ will also remain bounded for
all time.

%With these components in place, the Gaussian beam phases are constructed
%as
%{\small
%$$
%\Phi^\pm (t,y; X_0)=S^\pm (t; X_0)+p^\pm(t, X_0)(y-x^\pm(t, X_0))
%+\frac{1}{2}(y-x^\pm(t, X_0))^\top M^\pm(t;X_0)(y-x^\pm(t, X_0)),
%$$
%}where $p^\pm(t, X_0)=\nabla_x \Phi^\pm(t, x^\pm(t, X_0))$. The leading order of the amplitude is taken as
%$$
%A^\pm(t, y; X_0)=A^\pm(t;X_0).
%$$
The above construction ensures that the following
GB {\it Ansatz} is an approximate solution
$$
u_{GB}(t, y; X_0)= u_{GB}^+(t, y;X_0) + u_{GB}^-(t, y;X_0),
$$
where
$$
u_{GB}^\pm (t, y;X_0)=  A^\pm (t;X_0)\exp\left(\frac{i}{\epsilon}\Phi^\pm(t,
y;X_0)\right),
$$
where both  $A^\pm (t;X_0)$ and $\Phi^\pm(t, y;X_0)$ are computed from (\ref{a}) and (\ref{phi0}) with $H=\pm c(x)|p|$, respectively.

Here $A^\pm (0; X)$ are to be chosen so that a superposition will
match the initial data
$$
(u, u_t)|_{t=0}=(A_{\rm in}, B_{\rm
in})e^{iS_{\rm in}/\ep}
$$
to leading order.  For this matching we need (for
$X=(x,\nabla S_{\rm in}(x))$)
\begin{align*}
& A^+(0;X)+ A^-(0;X)=A^{(0)}_{\rm in}(x),\\
& \frac{i}{\ep} A^+(0;X) \partial_t \Phi^+(0, x;
X) + \frac{i}{\ep}A^-(0;X)\partial_t \Phi^-(0, x;
X) =\frac{1}{\ep} B^{(-1)}_{\rm in}(x).
\end{align*}
In the second relation we took only the leading term in
$e^{-iS_{\rm in}/\ep}u_t$.  Since the two Hamiltonians have
different signs,
$$\Phi^\pm(0, x; X)=S_{\rm in}(x) \quad {\rm and}\quad  \partial_t
\Phi^\pm(0, x; X)=\mp c(x)|\nabla_x S_{\rm in}(x)|,$$  the second
relation gives
\begin{equation}\label{id}
A^+(0; X) - A^-(0; X)=\frac{i B^{(-1)}_{\rm
in}(x)}{c(x) |\nabla_x S_{\rm in}(x)|}.
\end{equation}
Hence solving for $A^\pm$ we have
\begin{equation}\label{apm}
    A^\pm(0;X)=\frac{1}{2} \left( A^{(0)}_{\rm in}(x) \pm \frac{i B^{(-1)}_{\rm in}(x)}{c(x)| \nabla_x S_{\rm in}(x)|} \right).
\end{equation}
Note that we could simplify the superposition by taking some special initial data such that $B^{(-1)}_{\rm in}(x)= - iA^{(0)}_{\rm in}(x)c(x)|\nabla S_{\rm in}(x)|$. The advantage of these special choices is that we do not need a sum of two Gaussians to approximate the solution. We also note that for given initial $B^\ep_{\rm in}$ of order $O(1)$, i.e., $B^{(-1)}_{\rm in}=0$, we see that
$A^\pm(0;X)=\frac{1}{2}  A^{(0)}_{\rm in}(x).$
\section{Recovery of the high frequency wave fields}
Since the wave equation we consider is linear, the high frequency wave
field $u$ at $(t, y)$ in physical space is expected to be generated by a superposition of neighboring Gaussian beams
\begin{equation}\label{GB+}
u^\epsilon(t, y)=Z(n,\epsilon)\int_{\Omega(0)}u_{GB}(t,
y;X_0)dX_0,
\end{equation}
where $\Omega(0)$ is a bounded open set
containing
$$
\{X_0: \quad x_0\in {\rm supp}(A_{\rm
in})\cup {\rm supp}(B_{\rm in}), \quad
p_0 \in {\rm range}(\partial_x S_{\rm in})\}.
$$
 The normalization parameter
$Z(n, \epsilon)\sim \ep^{-n/2}$ is determined by
matching initial data against the Gaussian
profile.
%$$
%\|u_0(\cdot) -u^\ep(0, \cdot)\| \to 0, \quad \epsilon \to
%0.
%$$

Since the flows $X^\pm(t;X_0)$ are volume
preserving in phase space,
$$det\left(\frac{\partial X^\pm(t,
X_0)}{\partial{X_0}}\right)=1.$$ Using
$X=X^\pm(t, X_0)$ and their  inverses
$X_0=X_0^\pm(t, X)$, we obtain our  Gaussian beam
{\it Ansatz} in phase space
$$
u_{PGB}^\pm (t, y, X):=u^\pm_{GB}(t, y; X_0^\pm(t, X)).
$$
From (\ref{GB+}) it follows that
\begin{align} \notag
u^\epsilon(t, y)& =Z(n,\epsilon)\int_{\Omega(0)}\left[  u_{GB}^+(t,
y;X_0) + u_{GB}^-(t,
y;X_0) \right]dX_0 \\ \label{egb}
&= Z(n,\epsilon) \left[ \int_{\Omega^+(t)} u_{PGB}^+(t,
y, X) dX + \int_{\Omega^-(t)} u_{PGB}^-(t,
y, X) dX \right],
\end{align}
where
$$
\Omega^\pm(t)=X^\pm(t, \Omega(0)).
$$
Each phase space Gaussian beam has the form
\begin{equation}\label{pgb}
    u_{PGB} (t, y, X)= \tilde A (t, X)\exp \left( \frac{i}{\epsilon}
\tilde \Phi (t, y, X) \right),
\end{equation}
where
\begin{equation}\label{tphi}
\Phi (t, y, X)=\tilde S(t, X)+p\cdot (y-x)
+\frac{1}{2}(y-x)^\top \tilde M (t, X)(y-x).
\end{equation}
Note that though $u_{PGB}^\pm (t, y, X)$ are no
longer asymptotic solutions of the wave equation
in $(t,y)$, their superpositions over the moving
domains $\Omega^\pm(t)$ in $X$ remain asymptotic
solutions.

Let $\mathcal{L}$ be the Liouville operator defined by
\begin{equation}\label{liou}
    \mathcal{L} := \partial_t +V\cdot \nabla_X.
\end{equation}
If $\tilde w(t, X)$ is the phase space representative of  $w(t;X_0)$ in the sense that $w(t;X_0)=w(t, X(t, X_0))$
for any $t>0$, then
$$
\frac{d}{dt}w(t; X_0) = \mathcal{L} \tilde w(t, X).
$$
Hence from the Lagrangian formulation of equations
for $(S, M, A)$ we obtain PDEs for $(\tilde S, \tilde M, \tilde A)$ in (\ref{s}), (\ref{m}) and (\ref{a}):
\begin{align}\label{sp}
& \mathcal{L}(\tilde S) =0, \quad \tilde S(0,
X)=S_{\rm in}(x),\\\label{mp}
 &
 \mathcal{L}(\tilde M) + H_{xx}+H_{xp}\tilde M +\tilde MH_{px}+ \tilde MH_{pp} \tilde M=0, \quad
 \tilde M(0, X)=M_{\rm in}(x),\\ \label{ap}
& \mathcal{L}(\tilde A)  =\frac{\tilde A}{2H}\left[ H_p\cdot H_x +H_p \tilde MH_p -c^2 Tr(\tilde M)\right], \quad \tilde A(0, X)=A_{\rm in}(x),
\end{align}
where $H(x, p)=c(x)|p|$ or $H(x, p)=-c(x)|p|$.
The heart of the matter is equation (\ref{mp}).
It is known from \cite{Ralston82} that, if
$M_{\rm in}$ is symmetric and the imaginary part
of $M_{\rm in}$
is positive definite, then a global solution $\tilde M$ to (\ref{mp}) is guaranteed and has the properties:\\
i) $\tilde M=\tilde M^T,$ and \\
ii) $Im(\tilde M)$ is positive definite for all
$t>0$.

There are several ways of computing $\tilde M$.
Following \cite{JWY:2008} (see also \cite[Section
7]{LR:2009}), we use a level set method to
construct the Hessian:
\begin{equation}\label{M}
  \tilde M=-g_x (g_p)^{-1},
\end{equation}
where $g=\phi_1(t, X)+i\phi_2(t, X)$ with $\phi_i$ obtained by solving the Liouville equation
$$
 \mathcal{L}(\phi)=0.
$$
From the well-posedness theory of the wave equation
 we have the following.
\begin{lem}\label{lem31}
Let $u$ satisfy $P[u]=0$ in $[0, T]\times
\mathbb{R}^n$ with $(u, u_t)$ given at $t=0$, and
let $u^\epsilon$ be an asymptotic solution. Then
the error $e=u^\ep-u$ satisfies
\begin{equation}\label{e0}
   \|e(t)\|_E \leq \|e(0)\|_E  + \ep \int_0^t \left\|c^{-1} P[u^\ep] \right\|_{L^2}d \tau,
\end{equation}
where $\|e\|_E =\sqrt{2E}$ and
$$
E: =\frac{\ep^2}{2} \int_{\mathbb{R}^n} \left[c^{-2}|e_t|^2 +|\nabla_x e|^2 \right]dx.
$$
\end{lem}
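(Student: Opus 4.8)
The plan is to recognize (\ref{e0}) as a standard energy estimate for the inhomogeneous wave equation and to establish it by differentiating the energy $E$ along the evolution. Since $P[u]=0$ by hypothesis and $P$ is linear, the error $e=u^\ep-u$ solves $P[e]=P[u^\ep]=:f$. I would treat $e$ as complex-valued and work directly with the functional $E(t)$ appearing in the statement. The first step is to compute $\frac{d}{dt}E(t)$: differentiating under the integral sign and using $\frac{d}{dt}|e_t|^2=2\,\mathrm{Re}(\bar e_t e_{tt})$ together with $\frac{d}{dt}|\nabla_x e|^2=2\,\mathrm{Re}(\nabla_x\bar e_t\cdot\nabla_x e)$ gives
$$\frac{dE}{dt}=\ep^2\int_{\R^n}\bigl[c^{-2}\,\mathrm{Re}(\bar e_t e_{tt})+\mathrm{Re}(\nabla_x\bar e_t\cdot\nabla_x e)\bigr]\,dx.$$

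Next I would integrate the second term by parts in $x$, converting $\int\nabla_x\bar e_t\cdot\nabla_x e\,dx$ into $-\int\bar e_t\,\Delta e\,dx$; the boundary contributions vanish because the Gaussian beam superposition decays rapidly in $y$, so that $e$ (together with $u$, via finite propagation speed) has the requisite decay at spatial infinity. The key cancellation then comes from substituting $\Delta e=c^{-2}(e_{tt}-f)$, read off from $P[e]=e_{tt}-c^2\Delta e=f$: the two terms carrying $e_{tt}$ cancel exactly, leaving
$$\frac{dE}{dt}=\ep^2\int_{\R^n}c^{-2}\,\mathrm{Re}(\bar e_t\,f)\,dx.$$
By Cauchy--Schwarz this yields $\bigl|\frac{dE}{dt}\bigr|\le\ep^2\,\|c^{-1}e_t\|_{L^2}\,\|c^{-1}f\|_{L^2}$, and since $\|c^{-1}e_t\|_{L^2}^2\le\int_{\R^n}[c^{-2}|e_t|^2+|\nabla_x e|^2]\,dx=\|e\|_E^2/\ep^2$, the right-hand side is bounded by $\ep\,\|e\|_E\,\|c^{-1}f\|_{L^2}$.

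Finally, writing $\|e\|_E=\sqrt{2E}$ so that $\frac{dE}{dt}=\|e\|_E\,\frac{d}{dt}\|e\|_E$, I would divide by $\|e\|_E$ to obtain the differential inequality $\frac{d}{dt}\|e\|_E\le\ep\,\|c^{-1}P[u^\ep]\|_{L^2}$ and integrate from $0$ to $t$ to reach (\ref{e0}). The only delicate point here is the division when $\|e\|_E=0$, which I would handle by working with $\sqrt{2E+\del}$ and letting $\del\to0^+$ (or equivalently by continuity of the integrated inequality). I do not expect a single hard estimate to be the main obstacle; rather it is the bookkeeping: keeping the complex conjugates and real parts consistent so that the principal second-order part cancels cleanly, and justifying the spatial integration by parts through the decay of the beams and finite propagation speed.
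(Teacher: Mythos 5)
Your proposal is correct and follows essentially the same route as the paper: differentiate $E(t)$, integrate by parts so the principal terms combine into a divergence, use $P[e]=P[u^\ep]$, apply Cauchy--Schwarz, and integrate the resulting differential inequality for $\sqrt{2E}$. Your extra care with the real parts of the complex-valued integrands and with dividing by $\|e\|_E$ when it vanishes only makes explicit what the paper leaves implicit.
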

\begin{proof}
Since we start with the data with compact
support, at any finite time the support of the
solution remains bounded (due to finite speed of
propagation for the wave equation).

Let $e=u^\ep-u$. Then from $P[u]=0$
$$
P[e]=P[u^\ep]-P[u]=P[u^\ep].
$$
We now have
\begin{align*}
\frac{d}{dt}E(t) & = \ep^2 \int_{R^n}  \left[ c^{-2} e_t e_{tt} +\nabla e \cdot \nabla e_t \right] dx \\
& = \ep^2 \int_{R^n} \left[ \nabla \cdot(e_t\nabla e)+ c^{-2} e_t P[u^\ep] \right]dx \\
& \leq \ep^2  \left\| c^{-1} e_t \right\|_{L^2} \left\|c^{-1} P[u^\ep] \right\|_{L^2}\leq \ep \sqrt{2E}  \left\|c^{-1} P[u^\ep]\right\|_{L^2}.
\end{align*}
This, upon integration in time, leads to the desired estimate.
\end{proof}

\section{Control of initial error}
For the initial phase $S_{\rm in}$, we set
$p_0=\nabla_x S_{\rm in}(x_0)$  and form the
Lagrangian superpositions
$$
u^\ep(t, y)= Z(n,\epsilon)\int_{\Omega(0)}u_{GB}(t,
y; X_0)\delta(p_0-\nabla_x S_{\rm in}(x_0))dX_0.
$$
In order to track the deformation of the surface $p-\nabla_x S_{\rm in}(x)=0$ as time evolves, we introduce two level set functions $w=w^\pm (t, X)$ such that
\begin{equation}\label{w}
\mathcal{L}[w]=0, \quad w(0, X)=p-\nabla_x S_{\rm in}(x),
\end{equation}
with $H(x, p)=\pm c(x)|p|$. Here $w^\pm$ gives $\phi_2$ needed in (\ref{M}) and $\phi_1$ can be obtained  from solving the respective Liouville equation with $\phi_1(0, X)=x$.

%It is clear that $w$ remains smooth once it is initially so.
 % A modified  approximation is defined as \begin{equation}\label{psw}
%u^\ep(t, y):=Z(n, \ep)\int_{\Omega(t)}u_{PGB}(t, y, X)\delta(w(t, X))dX,
%\end{equation}
%which has taken care of the Dirac delta function in (\ref{delta}).
Using the volume preserving maps $X=X^\pm(t,
X_0)$, leads to the Gaussian beam superposition
in phase space
\begin{equation}\label{kegb}
u^\ep(t, y)=Z(n,\epsilon)\left[ \int_{\Omega^+(t)} u_{PGB}^+ \delta(w^+)dX+ \int_{\Omega^-(t)}u_{PGB}^-
\delta(w^-)dX \right],
\end{equation}
where $ \Omega^\pm(t)=X^\pm(t, \Omega(0)). $  Our
choice of initial data for the beams in this
superposition will be made to match the initial
data in (\ref{ini}). Set
\begin{equation}\label{i0}
    I(0)=\{x:\quad (x, p)\in \Omega(0), \quad p=\nabla_x S_{\rm in}(x)\}.
\end{equation}
We now use the Lagrangian formulation of the GB superposition to match the initial data.
\begin{equation}\label{uep}
    u^\ep(t, y)=Z(n,\epsilon)\int_{I(0)}u_{GB}(t,y; x_0)dx_0.
\end{equation}
Here and in what follows we use $u_{GB}(t,y; x_0)$ for $u_{GB}(t,y; x_0, \nabla_x S_{\rm in}(x_0))$.
If we take $S^\pm(0;x_0)=S_{\rm in}(x_0)$,  $M^\pm(0; x_0)=\partial_x^2 S_{\rm in}(x_0)+i\beta I$ with $\beta>0$ as well as $A^\pm(0; X_0)$ as defined in  (\ref{apm}), then
$$
u^\ep(0, y) = Z(n,\epsilon)\int_{I(0)}A_{\rm in}^{(0)}(x_0)e^{i\Phi(0, y;x_0)/\ep}dx_0,
$$
where
$$
\Phi(0, y; x_0)= T_{2}^{x_0}[S_{\rm in}](y) -\frac{\beta}{2}|y-x_0|^2.
%=S_{\rm in}(x_0) + \nabla_x S_{\rm in}(x_0)(y-x_0)+\frac{1}{2}(y-x_0)\cdot \partial_x^2 S_{\rm in}(x_0)(y-x_0)
$$
Here $T_j^x[S](y)$ denotes the $j^{th}$ order
Taylor polynomial of $S$ about $x$ at the point
$y$. Setting
$$
Z(n, \ep)=\left(\frac{\beta}{2\pi \ep} \right)^{n/2},
$$
we have
$$
u^\ep(0, y) = \int_{I(0)}A_{\rm in}^{(0)}(x_0)e^{\frac{i}{\epsilon}\left[T_{2}^{x_0}[S_{\rm in}](y)\right]} K\left(x_0-y, \frac{\epsilon}{2\beta}\right)dx_0,
$$
where $K(x, \tau)=\frac{1}{(4\pi \tau)^{n/2}}e^{-\frac{|x|^2}{4\tau}}$
is the usual heat kernel, satisfying ${\rm limit}_{\tau \downarrow
0}K(x, \tau)=\delta (x)$ as distributions on $\mathbb{R}^n$, and
$$
\int_x K(x-y, \tau)dx=1, \quad \forall \tau>0, \; y\in \mathbb{R}^n.
$$
On the other hand the initial wave field is
$$
u(0, y) =A_{\rm in}^\ep (y)e^{iS_{\rm in}(y)/\epsilon}=\int_{\mathbb{R}^n}
A_{\rm in}^\ep (y)e^{iS_{\rm in}(y)/\epsilon}K\left(x-y, \frac{\epsilon}{2 \beta}\right)dx.
$$
Both the phase and amplitude in the integrand can
be approximated by their Taylor expansion when
$|x-y|$ is small, say  $|x-y|<\epsilon^{1/3}$,
and the integral over the complement of this
neighborhood will then be $O(\exp
(-c\epsilon^{-1/3}))$ for some $c>0$. Thus the
main contributions to the error come from the
remainder terms in the Taylor expansions, and
this leads to
%\begin{equation}\label{ip}
 %    u^\ep(0, y) =\int_{I(0)} A_{\rm in}^\ep(x)e^{\frac{i}{\epsilon}\left[T_{2}^x[S_{\rm in}](y)\right]}
  %    K\left(x-y, \frac{\epsilon}{2}\right)dx=u(0, y) +O(\ep).
%\end{equation}
%Indeed, the approximate accuracy is ensured by the following estimate.
\begin{lem} \label{tan}\cite{Tan08}
Let $S_{\rm in} \in C^\infty (R^n)$ be a real-valued function, and $A_{\rm in} \in
C_0^\infty (R^n) $.
%and $\rho \in C_0^\infty (R^n)$ be such that
%$\rho\geq 0$, $\rho \equiv 1$ in a ball of radius $\delta  > 0$
%about the origin.
Then
\begin{align}\label{l2}
 \left\|u(0, \cdot)-u^\ep(0, \cdot) \right\|_{L^2} & \lesssim
\ep^{\frac{1}{2}}, \\ \label{h1}
\ep \left\|u(0, \cdot)-u^\ep(0, \cdot) \right\|_{H^1} & \lesssim
\ep^{\frac{1}{2}}.
\end{align}
\end{lem}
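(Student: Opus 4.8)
The plan is to work directly from the two integral representations displayed just above the statement, in which both $u(0,\cdot)$ and $u^\ep(0,\cdot)$ appear as integrals of oscillatory profiles against the \emph{same} heat kernel $K(x_0-y,\ep/(2\beta))$ (recall $\int K(\cdot-y,\tau)\,dx_0=1$, so the constant-in-$x_0$ factor $A_{\rm in}^\ep(y)e^{iS_{\rm in}(y)/\ep}$ in $u(0,y)$ may be put under the integral as well). Subtracting the integrands, I would split the $x_0$-integral into the near-diagonal region $|x_0-y|<\ep^{1/3}$ and its complement. On the complement $K$ is $O(\exp(-c\ep^{-1/3}))$, so that contribution is negligible to all orders; on the near-diagonal region everything is controlled by Taylor remainders. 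The one analytic input I will use repeatedly is the Gaussian moment bound $\int_{\R^n}|z|^m K(z,\tau)\,dz\lesssim\tau^{m/2}\sim\ep^{m/2}$, together with the fact that both wave fields are supported (up to exponentially small tails) in a fixed bounded set, so that a uniform-in-$y$ bound converts into the same $L^2$ bound.

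For the $L^2$ estimate (\ref{l2}) I would factor $e^{iS_{\rm in}(y)/\ep}$ out of the difference and bound the remaining integrand by two contributions. The amplitude gap $A_{\rm in}^\ep(y)-A_{\rm in}^{(0)}(x_0)$ is $O(\ep)+O(|y-x_0|)$, the first from the expansion (\ref{ae}) and the second from smoothness of $A^{(0)}_{\rm in}$; the phase gap gives $|1-e^{i(T_2^{x_0}[S_{\rm in}](y)-S_{\rm in}(y))/\ep}|\le|y-x_0|^3/\ep$ by the third-order Taylor remainder. Hence the integrand difference is $\lesssim[\ep+|y-x_0|+|y-x_0|^3/\ep]\,K$, and integrating against $K$ via the moment bound yields the pointwise estimate $O(\ep)+O(\ep^{1/2})+O(\ep^{-1}\!\cdot\ep^{3/2})=O(\ep^{1/2})$. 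Compact support then gives (\ref{l2}).

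The $H^1$ bound (\ref{h1}) is where the real work lies, since a naive differentiation of either field produces a factor $\ep^{-1}$, and the two $O(\ep^{-1})$ pieces must cancel to leave only $O(\ep^{-1/2})$. Differentiating $u^\ep(0,\cdot)$ produces, besides a harmless $O(1)$ term from $\partial_y A_{\rm in}^{(0)}$, two dangerous terms: a phase term carrying $\frac{i}{\ep}\partial_y T_2^{x_0}[S_{\rm in}](y)$, and a term in which the derivative falls on the concentrated kernel, namely $\ep^{-1}\beta(x_0-y)K$. For the latter I would use $\partial_y K=-\partial_{x_0}K$ and integrate by parts in $x_0$; the base-point identity $\partial_{x_0}T_2^{x_0}[S_{\rm in}](y)=O(|y-x_0|^2)$ then converts this into $\int[\partial_{x_0}A_{\rm in}^{(0)}+A_{\rm in}^{(0)}\frac{i}{\ep}O(|y-x_0|^2)]e^{iT_2/\ep}K\,dx_0$, which is $O(1)$ after the moment bound. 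For the phase term I would compare it against the leading part $\frac{i}{\ep}\nabla S_{\rm in}(y)\,u(0,y)$ of $\nabla u$: since $\partial_y T_2^{x_0}[S_{\rm in}](y)=\nabla S_{\rm in}(y)+O(|y-x_0|^2)$, the two $\ep^{-1}$ terms differ only through the amplitude gap $O(\ep)+O(|y-x_0|)$, the phase gap $O(|y-x_0|^3/\ep)$, and this $O(|y-x_0|^2)$ slope gap. The prefactor $\ep^{-1}$ times $[\ep+|y-x_0|+|y-x_0|^3/\ep]K$, integrated against the moments, gives $O(1)+O(\ep^{-1/2})+O(\ep^{-1/2})=O(\ep^{-1/2})$ uniformly in $y$, hence $\|\nabla(u-u^\ep)(0,\cdot)\|_{L^2}\lesssim\ep^{-1/2}$; combined with (\ref{l2}) this is exactly (\ref{h1}).

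I expect the cancellation of the two $O(\ep^{-1})$ contributions in the $H^1$ estimate to be the main obstacle: one must simultaneously match the leading phase slopes of $u$ and $u^\ep$ (so that only the $O(|y-x_0|^2)$ Taylor remainder of $\nabla S_{\rm in}$ survives) and dispose of the $\partial_y K$ term by the integration-by-parts and base-point argument above. Once these two singular effects are reduced to Gaussian moments of Taylor remainders, the stated powers $\ep^{1/2}$ and $\ep^{-1/2}$ follow mechanically from $\int|z|^m K\lesssim\ep^{m/2}$.
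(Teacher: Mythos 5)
Your proof is correct and follows essentially the same route the paper sketches (and attributes to \cite{Tan08}): represent both fields as integrals against the heat kernel $K(\cdot,\ep/(2\beta))$, split into $|x_0-y|<\ep^{1/3}$ and its exponentially small complement, and reduce everything to Gaussian moments of Taylor remainders. The only stylistic difference is that the $\partial_y K$ term in the $H^1$ estimate does not actually require your integration by parts --- the direct moment bound $\ep^{-1}\int|x_0-y|\,K\,dx_0\lesssim\ep^{-1/2}$ already suffices --- though your argument yields the sharper $O(1)$ bound for that term.
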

\begin{rem}We note that a cutoff function is necessary and important when one is building beams of higher accuracy %because the higher order terms in the Taylor expansion of the phase can change the sign of its imaginary part when %one does not stay close to the central ray.
\end{rem}

%We take the above approximation in (\ref{ip}) as initial data for $u^\ep(0, y)$
%and rewrite it as follows
%\begin{equation}\label{delta}
% u^\epsilon(0, y)=Z(n, \epsilon) \int_{\Omega(0)}u_{PGB}(0,y,
%X)\delta(p-\nabla_x S_{\rm in}(x))dX, \quad Z(n, \ep)=\frac{1}{(2\pi \ep)^{n/2}}
%\end{equation}
%with $\tilde A_{\rm in}=A_{\rm in}(x), \; \tilde S_{\rm in}(X)=S_{\rm in}(x)$ and $\tilde
%M_{\rm in}(X)=\partial_x^2 S_{\rm in}(x)+i I$. The above lemma ensures that
%\begin{equation}\label{e0}
%\left\|f(\cdot)-u^\ep(0, \cdot) \right\|_{H^1} \lesssim \ep^{-1/2}.
%\end{equation}

We now show the initial error of time derivative of the GB superposition is also under control.
We compute the time derivative of (\ref{uep}) to obtain
$$
\partial_t u^\ep(t, y) =Z(n,\epsilon)\int_{I(0)} \partial_t u_{GB}(t,
y; x_0)dx_0,
$$
where $u_{GB}(t,y; x_0)=u_{GB}^+(t,y; x_0)+u_{GB}^-(t,y; x_0)$ with
\begin{align*}
\partial_t u_{GB}^\pm(t,y; x_0)=\left[\partial_t A^\pm +\frac{i}{\ep}A^\pm \partial_t \Phi^\pm \right]e^{i\Phi^\pm(t, y;x_0)/\ep}.
\end{align*}
Note that the GB construction ensures that
\begin{align*}
\partial_t \Phi^\pm(t, y;x_0)= \mp c(y)| \nabla \Phi^\pm(t,y;x_0)| +O(|y-x(t, x_0)|^3).
\end{align*}
Recall (\ref{a}) we have  $ \partial_t A(t;x_0) \sim
O(1). $ Hence from (\ref{id}) we have
$$
\partial_t u_{GB}(0,y; x_0)=\left[O(1)+ \ep^{-1} (B^{(-1)}(x_0)+O(|y-x_0|))\right]e^{i\Phi^\pm(0,
y;x_0)/\ep}.
$$
%where $F$ is a linear combination of  $A_{\rm in}^{(0)}$ and $B_{\rm in}^{(-1)}$.
%This when using (\ref{apm}) gives
%$$
%\partial_t u^\ep(0,y)=\int_{I(0)}\left[ (\ep^{-1}B^{(-1)}_{\rm in} +O(1)) \right]e^{\frac{i}{\epsilon}\left[T_{2}^{x_0}[S_{\rm in}](y)\right]} K\left(x_0-y, \frac{\epsilon}{2\beta}\right)dx_0.
%$$
Note that $$ \left\| Z(n, \ep) \int_{I(0)}
\left[O(1)+O\left(\frac{|y-x_0|}{\ep}\right)\right]e^{i\Phi^\pm(0,
y;x_0)/\ep}dx_0 \right\|_{L^2_y}\leq C(1+\ep^{-1/2}),
$$
which together with Lemma \ref{tan} again gives
 \begin{align}\label{e0}
\|u^\ep(0,\cdot)- u(0, \cdot)\|_{E}\leq  \ep \|u^\ep(0,\cdot)- u(0, \cdot)\|_{H^1}+\ep  \|\partial_t u^\ep(0,\cdot)-\partial_t u(0, \cdot)\|_{L^2_y} \lesssim \ep^{1/2}.
\end{align}
\begin{rem}
The above analysis shows that one could choose $B_{\rm in}^{(-1)}$ to simplify the superposition, as was pointed out
 in Section 2. For example,  \\
(i) for $B_{\rm in}^{(-1)}=-ic(x)| \nabla S_{\rm
in}|$, then $A^+(0;X_0)=A_{\rm in}^{(0)}(x_0),
\quad A^-(0;X_0)=0 $
\begin{equation}\label{kegb++}
u^\ep(t, y)=Z(n,\epsilon)\left[ \int_{\Omega^+(t)}u_{PGB}^+ \delta(w^+)dX \right];
\end{equation}
(ii) for $B_{\rm in}^{(-1)}=0$, then
$A^\pm(0;X_0) =\frac{1}{2}A_{\rm in}^{(0)}(x_0)$
\begin{equation}\label{kegb+}
u^\ep(t, y)=Z(n,\epsilon)\left[ \int_{\Omega^+(t)}u_{PGB}^+ \delta(w^+)dX+ \int_{\Omega^-(t)}u_{PGB}^-
\delta(w^-)dX \right].
\end{equation}
\end{rem}

\section{Propagation of the approximation error}
We now turn to quantify the evolution error $P[u^\ep]$. Recall the Schur's lemma: If $[Tf](y) =\int
K(x, y)f(x)dx$ and
$$
{\rm sup}_x
\int_y |K(x, y)|dy = C_1, \; {\rm sup}_y
\int_x |K(x, y)|dx = C_2,
$$
then
$$
\|Tf\|_{L^2}\leq \sqrt{C_1C_2}\|f\|_{L^2}.
$$
\begin{proof}
We have by Schwartz
\begin{align*}
|[Tf](y)|^2 \leq
\left(
\int|K(x,y)|f(x)dx\right)^2 & \leq
\int |K(x,y)|dx\int
|K(x,y)||f(x)|^2dx\\
 & \leq  C_2 \int
|K(x,y)||f(x)|^2dx.
\end{align*}
So integrating both sides in $y$ and taking the square root gives the result.
\end{proof}
We now apply Schur's lemma to a typical term in $\int_{I(0)} P[u^\ep]dx_0$:
$$
[TA](y) = \int_{I(0)} A(t; x_0) F(t, y; x_0)e^{i\Phi(t, y;x_0)/\ep}dx_0,
$$
where the imaginary part of $\Phi(t, y;x_0)$ is bounded below by $cI$ and for convenience we will assume that $|F|\leq |y-x(t, x_0)|^k$. Then one can apply Schur's lemma with
$$
C_1={\rm sup}_{x_0}\int_{\mathbb{R}^n} |y-x(t, x_0)|^k e^{-(c/\ep)|y-x(t, x_0)|^2}dy=\ep^{\frac{k}{2}+\frac{n}{2}}\int_z |z|^ke^{-c|z|^2}dz, \quad {\rm and}
$$
\begin{equation}\label{c2}
C_2(t, \ep)={\rm sup}_{y}\int_{I(0)}|y-x(t, x_0)|^k e^{-(c/\ep)|y-x(t, x_0)|^2}dx_0.
\end{equation}
In general one does not know what $C_2(t, \ep)$ will be. As long as $A$ has compact support $C_2$ will be at least
bounded by $c\ep^{k/2}$.  Thus the error in $L^2$ norm will be bounded by $c\ep^{k/2 +n/4}$. We now show that for the wave equation, a better rate can be obtained.
\begin{lem}We have
$$
C_2(t, \ep) \lesssim \ep^{(k+1)/2}.
$$
\end{lem}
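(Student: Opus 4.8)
The plan is to exploit a non-degeneracy of the ray-center map $x_0\mapsto x(t,x_0)$ that is special to the homogeneous Hamiltonian $H=\pm c(x)|p|$, and which reflects the fact that the wave-equation caustics are weaker. Write $N(t,x_0)=\partial x(t,x_0)/\partial x_0$ for the Jacobian of the ray centers on $I(0)$, and let $\hat p_0(x_0)=\nabla_x S_{\rm in}(x_0)/|\nabla_x S_{\rm in}(x_0)|$ be the initial ray direction. First I would show that, although $N$ may become singular at caustics, it never degenerates in the distinguished direction $\hat p_0$: precisely, the scalar $\langle p(t,x_0),N(t,x_0)\hat p_0\rangle$ is conserved along the flow and equals $|\nabla_x S_{\rm in}(x_0)|$. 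This follows from the degree-one homogeneity of $H$ in $p$, which gives Euler's relations $H_{pp}\,p=0$ and $p\cdot H_p=H$; differentiating the pairing and using the Jacobi (variational) equations for $N\hat p_0$ and $\partial_{x_0}p\,\hat p_0$ makes the two surviving terms cancel. Combined with conservation of the energy $c(x)|p|$, this yields the uniform lower bound $|N(t,x_0)\hat p_0|\ge c(x(t,x_0))/c(x_0)\ge c_{\min}/c_{\max}>0$ for all $x_0\in I(0)$ and $t\in[0,T]$, where $c_{\min},c_{\max}$ bound the sound speed $c(\cdot)$.

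With this in hand, I would reduce the $n$-dimensional integral in (\ref{c2}) to a one-dimensional one. Foliate $I(0)$ by the integral curves of the smooth unit field $\hat p_0$ and introduce adapted coordinates $(s,\theta)$, with $\partial_s$ along $\hat p_0$ and $\theta\in\R^{n-1}$ labeling the curves; the Jacobian of this change of variables is bounded above and below. By Fubini I integrate first in $s$. Along each curve the velocity of the ray center is $\partial_s x(t,x_0(s,\theta))=N\hat p_0$, whose length is $\ge c_{\min}/c_{\max}=:\delta$ by the previous step, while the remaining $\theta$-integration runs over a bounded set and contributes only an $O(1)$ factor.

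For the inner integral it then suffices to bound $\int |y-x(s)|^k e^{-(c/\ep)|y-x(s)|^2}\,ds$, where $s\mapsto x(s):=x(t,x_0(s,\theta))$ has $|x'(s)|\ge\delta$ and $c>0$ is the constant from $\operatorname{Im}\Phi\ge c|y-x|^2$. I would split the integrand as $|y-x|^k e^{-(c/\ep)|y-x|^2}\le\big(\sup_{r\ge0}r^k e^{-(c/2\ep)r^2}\big)\,e^{-(c/2\ep)|y-x|^2}$, the supremum being $C_k\ep^{k/2}$. Partitioning the $s$-range into finitely many pieces on each of which a fixed unit vector $e$ satisfies $\langle e,x'(s)\rangle\ge\delta/2$ (possible since $x'$ stays in a narrow cone by continuity, with the number of pieces bounded uniformly on $[0,T]$), I bound the remaining factor by $e^{-(c/2\ep)\langle e,y-x\rangle^2}$ and substitute $u=\langle e,x(s)\rangle$, which is strictly monotone. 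The one-variable Gaussian integrates to $O(\sqrt\ep)$ independently of $y$, so each piece contributes $\ep^{k/2}\cdot\sqrt\ep$, giving the inner integral $\lesssim\ep^{(k+1)/2}$. Re-assembling the $\theta$-integration yields $C_2(t,\ep)\lesssim\ep^{(k+1)/2}$, uniformly in $y$ and $t$.

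The main obstacle is the first step: recognizing and proving that the ray-center map retains a uniformly non-degenerate direction despite caustics. This is exactly where the structure of the acoustic wave equation enters—degree-one homogeneity of $H$ produces the conserved pairing $\langle p,N\hat p_0\rangle$ that the Schr\"odinger case lacks—and it is responsible for the improvement to the exponent $(k+1)/2$ from the trivial $k/2$. The subsequent reduction and the one-dimensional estimate are routine once uniformity in $y$ and $t\in[0,T]$ is tracked, which the half-Gaussian splitting handles automatically since it never uses more than a single non-degenerate direction.
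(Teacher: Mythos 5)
Your proof is correct, and it turns on the same structural fact as the paper's, though you reach it and exploit it differently. The identity at the heart of both arguments is $\bigl(\partial x/\partial x_0\bigr)^{\top}p(t,x_0)=p_0$ on the Lagrangian manifold $p_0=\nabla_xS_{\rm in}(x_0)$; your conserved pairing $\langle p,N\hat p_0\rangle=|\nabla_xS_{\rm in}(x_0)|$ is precisely this identity contracted with $\hat p_0$. The paper obtains it in one line by differentiating the conserved phase $S(t,x(t,x_0))=S_{\rm in}(x_0)$ with respect to $x_0$ (conservation of $S$ being where degree-one homogeneity enters, via $p\cdot H_p=H$), whereas you rederive it through the variational equations together with Euler's relations $H_{pp}p=0$ and $p\cdot H_p=H$ --- more work for the same statement, but it makes the mechanism explicit. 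From the identity the paper extracts only the qualitative consequence that some entry $\partial x_1/\partial x_{01}$ is nonzero, then uses the implicit function theorem to take $z=x_1(t,x_0)$ as a local coordinate and restores uniformity with a partition of unity; you instead get the quantitative, uniform directional bound $|N\hat p_0|\ge c_{\min}/c_{\max}$ by Cauchy--Schwarz plus conservation of $c(x)|p|$ along the flow, which lets you foliate $I(0)$ by the $\hat p_0$-curves and run a monotone substitution $u=\langle e,x(s)\rangle$. The final step --- peeling off $\sup_{r}r^{k}e^{-cr^{2}/(2\ep)}\lesssim\ep^{k/2}$ and integrating the residual Gaussian over the single non-degenerate variable to gain $\sqrt\ep$ --- is the same stretched-coordinate computation the paper performs with $z-y_1=\sqrt\ep\,\xi$. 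What your version buys is an explicit lower bound, uniform in $t$ and $x_0$, on the non-degenerate direction (the paper's uniformity is left implicit in compactness and the partition of unity); what it costs is the extra bookkeeping of the foliation and the cone decomposition of the $s$-range, which the paper avoids by working with a fixed coordinate direction locally.
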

\begin{proof} From (\ref{s}) and taking $p_0=\nabla_x S_{\rm in}(x_0)$ it follows
$$
S(t, x(t, x_0))=S_{\rm in}(x_0), \quad \forall t>0.
$$
Differentiation of this equation in $x_0$ gives
$$
\frac{\partial x}{\partial x_0}p=p_0, \quad p(t, x_0):=\nabla_x S(t, x(t, x_0)).
$$
For non-constant initial phase, at least one element in the deformation matrix $\frac{\partial x}{\partial x_0}$ is non-zero. Assume $\frac{\partial x_1}{\partial x_{01}}\not=0$ near $x_0^*$, then writing $x_0=(x_{01}, \hat x_0)$ there exists a function $h$ such that $x_{01}=h(t, z, \hat x_0) $ and
 $$
z\equiv x_1(t, h(t, z, \hat x_0), \hat x_0)
$$
in the neighborhood of $x_0^*$.  Also the map $(x_{01}=h(t, z, \hat x_0), \hat x_0) \to (z, \hat x_0)$ is invertible, with the Jacobian determined by
$$
J=\det\left( \frac{\partial (x_{01}, \hat x_0) }{\partial(z, \hat x_0)}\right)=\left|\frac{\partial h}{\partial z}\right|=\left|\frac{\partial x_1}{\partial x_{01}}\right|^{-1}.
$$
With this map we rewrite the underlying quantity as
$$
C_2= \int_{(z, \hat x_0)}(|\hat y-\hat x(t, z, \hat x_0)|^2+|y_1-z|^2)^{k/2}\exp \left( -\frac{c}{\ep}(|\hat y-\hat x(t, z, \hat x_0)|^2+|y_1-z|^2) \right)J d \hat x_0 dz.
$$
Using a stretched coordinate in $z$ so that $z-y_1=\sqrt{\ep}\xi$, with $a:=\hat y-\hat x(t, z, \hat x_0)$, we obtain
\begin{align*}
C_2 &= \sqrt{\ep}\int_{(\xi, \hat x_0)}(|a|^2+\ep|\xi|^2 )^{k/2}e^{-c|\xi|^2} \exp \left( -\frac{c}{\ep}|a|^2
\right)J d \hat x_0 d\xi.
%&= \sqrt{\ep}\int_{(\xi, \hat x_0)}(|a|^2e^{-c|\xi|^2/k} +\ep|\xi|^2e^{-c|\xi|^2/k} )^{k/2} e^{-c|\xi|^2/2}\exp \left( -\frac{c}{\ep}|a|^2
%\right)J d \hat x_0 d\xi.
\end{align*}
Rewriting $e^{-c|\xi|^2}=e^{-c|\xi|^2/2}\cdot  e^{-c|\xi|^2/2}$, and using the fact that $e^{-c|\xi|^2/2} \leq 1$
and  $|\xi|^2e^{-c|\xi|^2/2} \leq C$, we obtain
\begin{align*}
C_2 & \leq \sqrt{\ep}\int_{(\xi, \hat x_0)} (|a|^2 +C\ep)^{k/2} e^{-c|\xi|^2/2}e^{-c|a|^2/\ep }Jd \hat x_0 d\xi \quad \\
& \lesssim \sqrt{\ep} \ep^{k/2} \int_{(\xi, \hat x_0)}e^{-c|\xi|^2/2}Jd\hat x_0 d\xi.
  \end{align*}
Here we have used the fact that $(|a|^2 +C\ep)^{k/2}e^{-c|a|^2/\ep } \lesssim \ep^{k/2}$ for any
$a\in \mathbb{R}^{n-1}$. As long as the initial domain for $x_0$ is finitely compact, the above integral is uniformly bounded. Note that the local feature of the used map is not restricted, since one could use a partition of unity to decompose $C_2$ into a finite sum of terms with the same rate of error. The desired estimate thus follows.
\end{proof}
This lemma enables us to conclude the following key estimate
\begin{equation}\label{Tf}
\|T[A]\|_{L^2} \lesssim \ep^{k/2 +(1+n)/4},
\end{equation}
which will be used to prove the following theorem.
\begin{thm}\label{3.3} Let $P=\partial_t^2 -c^2(x)\Delta $ be the linear wave operator and $u^\ep$ be defined in (\ref{kegb})
with $Im(M^\pm _{\rm in})=\beta I$ and $Z(n, \epsilon)=(\beta/(2\pi\ep))^{n/2}$. If both $A_{\rm in}$ and $B_{\rm in}$ have compact supports,  then $u^\epsilon$ is an asymptotic solution and
satisfies
\begin{equation}\label{pk}
   \|P[u^\ep](t, \cdot)\|_{L_y^2} \lesssim
\ep^{-\frac{1+n}{4}}.
\end{equation}
\end{thm}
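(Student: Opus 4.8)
The plan is to work not with the phase-space form (\ref{kegb}) but with the equivalent Lagrangian superposition (\ref{uep}), in which each $u_{GB}(t,y;x_0)=u_{GB}^+(t,y;x_0)+u_{GB}^-(t,y;x_0)$ is a \emph{genuine} Gaussian beam, and the domain of integration $I(0)$ is \emph{fixed} (unlike the moving $\Omega^\pm(t)$). Because the data have compact support, $I(0)$ is precompact and the integrand is smooth in $(t,y,x_0)$, so $P$ may be taken under the integral sign, giving
\[
P[u^\ep](t,y)=Z(n,\ep)\int_{I(0)}\bigl(P[u_{GB}^+]+P[u_{GB}^-]\bigr)\,dx_0 .
\]
For each sign the geometric-optics expansion (\ref{pap}) with a single amplitude term reads
\[
P[u_{GB}^\pm]=e^{i\Phi^\pm/\ep}\Bigl(-\ep^{-2}G^\pm A^\pm+2i\,\ep^{-1}L^\pm A^\pm+P[A^\pm]\Bigr),
\]
and I would estimate the three resulting $y$-integrals separately.

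Next I would record the order of vanishing of each coefficient on the central ray $y=x^\pm(t,x_0)$. Factoring $G^\pm=(\partial_t\Phi^\pm-c|\nabla\Phi^\pm|)(\partial_t\Phi^\pm+c|\nabla\Phi^\pm|)$, one of the two factors is the eikonal residual, which by the construction of the phase through the eikonal and Hessian equation (\ref{m}) vanishes together with its first and second $y$-derivatives on the ray, while the other factor is $O(1)$; hence $G^\pm=O(|y-x^\pm|^3)$. The amplitude equation (\ref{a}) makes $L^\pm A^\pm=O(|y-x^\pm|)$, and $P[A^\pm]=O(1)$ since $A^\pm$ depends on $(t,x_0)$ only. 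Finally, the positive definiteness of $Im(\tilde M^\pm)$ from \cite{Ralston82}, uniform over the compact set $[0,T]\times I(0)$, yields the envelope $|e^{i\Phi^\pm/\ep}|\le e^{-(c/\ep)|y-x^\pm|^2}$ required for the Gaussian estimates.

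I would then feed each term into the key estimate (\ref{Tf}), i.e.\ $\bigl\|\int_{I(0)}g\,e^{i\Phi/\ep}dx_0\bigr\|_{L^2}\lesssim\ep^{k/2+(1+n)/4}$ whenever $|g|\lesssim|y-x|^k$ on the support, and carry the explicit prefactors using $Z(n,\ep)\sim\ep^{-n/2}$:
\begin{align*}
\ep^{-2}G^\pm A^\pm\ (k=3):&\quad Z\,\ep^{-2}\cdot\ep^{3/2+(1+n)/4}\sim\ep^{-(1+n)/4},\\
\ep^{-1}L^\pm A^\pm\ (k=1):&\quad Z\,\ep^{-1}\cdot\ep^{1/2+(1+n)/4}\sim\ep^{-(1+n)/4},\\
P[A^\pm]\ (k=0):&\quad Z\cdot\ep^{(1+n)/4}\sim\ep^{(1-n)/4}.
\end{align*}
The first two contributions land exactly on $\ep^{-(1+n)/4}$, while the third is smaller by $\ep^{1/2}$; summing over the two signs only multiplies by a constant. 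This gives $\|P[u^\ep](t,\cdot)\|_{L^2_y}\lesssim\ep^{-(1+n)/4}$, and feeding this into Lemma~\ref{lem31} is what will later produce the sharp convergence rate.

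The main obstacle is structural rather than computational. First, one must invoke the identity between (\ref{kegb}) and (\ref{uep}) established earlier, since only the Lagrangian form is built from true beams $u_{GB}^\pm$ to which the expansion (\ref{pap}) applies; the kernels $u_{PGB}^\pm$ are not themselves approximate solutions. Second, the sharpness rests on two facts conspiring: the cubic vanishing $G^\pm=O(|y-x^\pm|^3)$, and the extra half-power gain $C_2\lesssim\ep^{(k+1)/2}$ from the preceding lemma. The latter enters $\|T[A]\|$ through Schur's square root as an $\ep^{1/4}$ improvement over the naive $\ep^{k/2+n/4}$, which is precisely what lowers the final bound from $\ep^{-(n+2)/4}$ to the stated $\ep^{-(1+n)/4}$; the delicate point is to verify that the $\ep^{-2}$ and $\ep^{-1}$ terms balance at exactly this exponent while the $\ep^0$ term is genuinely of lower order.
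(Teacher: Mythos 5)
Your proposal is correct and follows essentially the same route as the paper: reduce the phase-space superposition (\ref{kegb}) to the Lagrangian form over the fixed set $I(0)$, expand $P[u_{GB}^\pm]$ via (\ref{pap}) with the vanishing orders $k=3,1,0$ for the three coefficients, and apply the Schur-based key estimate (\ref{Tf}) together with $Z\sim\ep^{-n/2}$. The only cosmetic difference is that you obtain $G^\pm=O(|y-x^\pm|^3)$ by factoring out the eikonal residual, whereas the paper Taylor-expands $G$ directly; both rest on the same construction of $(S,p,M)$ and your exponent bookkeeping matches the paper's.
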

\begin{proof}
Using the volume-preserving map of $X=X(t, X_0)$ and $w(t, X(t, X_0))=w(0, X_0)$, we obtain
\begin{align*}
u^\ep(t, y)&  =Z(n,\epsilon)\int_{\Omega(0)}u_{PGB}(t,
y, X(t, X_0))\delta(w(t, X(t, X_0)))dX_0\\
& =Z(n,\epsilon)\int_{\Omega(0)}u_{GB}(t,
y; X_0)\delta(w(0, X_0))dX_0\\
&= Z(n,\epsilon)\int_{\Omega(0)}u_{GB}(t,
y; X_0)\delta(p_0-\nabla_x S_{\rm in}(x_0))dX_0 \\
& =Z(n,\epsilon)\int_{I(0)}u_{GB}(t,
y; x_0)dx_0.
\end{align*}
According to the GB construction, $u_{GB}(t,
y; x_0)$ is an asymptotic solution for each $x_0$, so will be their superpositions $u^\epsilon(t, y)$.
It remains to verify (\ref{pk}).  First we see that
$$
P[u^\epsilon(t, y)]=Z(n,\epsilon)\int_{I(0)}P[u_{GB}(t,
y; x_0)]dx_0,
$$
where
\begin{equation}
   P[A(t;x_0)e^{i\Phi(t, y;x_0)/\ep}]
   =\left( \ep^{-2} c_{-2}(t, y) +\ep^{-1}c_{-1} +c_0 \right)e^{i\Phi(t, y;x_0)/\ep},
\end{equation}
where for $G(t, y)=|\partial_t \Phi|^2
-c^2|\nabla_y \Phi|^2$, we have
\begin{align*}
c_{-2}(t, y) &= - G(t, y)A,\\
c_{-1}(t, y) &=2i \left[ \partial_t A \partial_t\Phi +\frac{1}{2}AP[\Phi] \right],\\
c_{0}(t, y) &=\partial_t^2 A(t;x_0).
\end{align*}
 Using Taylor expansion around $x=x(t, x_0)$ we have
$$
G(t, y)=G(t, x)+\partial_x G(t, x)\cdot(y-x) +\frac{1}{2}(y-x)^\top \partial_x^2G (y-x)+O(|y-x|^3).
$$
Then the Gaussian beam construction sketched in Section 2 ensures that
$$
|c_{-2}(t, y)|\leq C|A||y-x|^{3}.
$$
Also using the construction for $A$ , we are able to show
$$
|c_{-1}(t, y)|\leq C|A||y-x|, \quad |c_{0}(t, y)|\leq C|A|.
$$
The construction with positive $Im(M)$ guarantees that
$$
\Phi(t, y;x_0)\geq c |y-x|^2.
$$
Consequently,
\begin{align*}
 Z^{-1} \|P[u^\epsilon(t, \cdot)]\|_{L^2} & \leq  \left \|
\int_{I(0)} A e^{-Im(\Phi(t,y;x_0))/\ep}\left|\ep^{-2} c_{-2} +\ep^{-1} c_{-1}  + c_0 \right|dx_0  \right\|_{L^2_y} \\
& \leq  \sum_{j=-2}^0 \ep^{j} \left \|
\int_{I(0)} |A||c_j| e^{-c|y-x(t,
x_0)|^2/\epsilon} dx_0\right\|_{L^2_y},
\end{align*}
continuing the estimate by using  the key estimate (\ref{Tf}) with $k=3, 1, 0$ for $F=c_{-2}, c_{-1}, c_0$, respectively
\begin{align*}
&  \lesssim \left[\ep^{-2} \ep^{3/2} +\ep^{-1}\cdot \ep ^{1/2}+1 \right]\ep^{(1+n)/4}  \\
&  \lesssim \ep^{-1/2 +(1+n)/4},
\end{align*}
which when using $Z\sim \ep^{-n/2}$ proves the result.
\end{proof}
This combined with the obtained initial error and total error estimate in Lemma 3.1 gives
\begin{thm}\label{thm4.3}
Given $T>0$, and let $u$ be the solution of the wave equation subject to the initial data $(u, u_t)(0, x)=(A_{\rm }^\ep, B_{\rm in}) e^{iS_{\rm in}(x)/\ep}$. Let $u^\epsilon$
be the first order approximation defined in (\ref{kegb}) with initial data
satisfying $S^\pm(0; x)=S_{\rm in}(x)$,  $ M^\pm(0;x) =
\partial_x^2 S_{\rm in}(x)+i\beta I$, and $ A^\pm(0; x)=\frac{1}{2} \left( A^{(0)}_{\rm in}(x)
 \pm \frac{ iB^{(-1)}_{\rm in}}{c(x)| \nabla_x S_{\rm in}|} \right)$  with $|supp(A^\ep_{\rm in})|+|supp(B^\ep_{\rm in}) |<\infty$.
Then there exists $\ep_0>0$, a normalization parameter $Z(n,
\epsilon) =\left( \frac{\beta}{2\pi \ep} \right)^{n/2}$, and a constant $C$ such that for all $\epsilon \in (0,
\epsilon_0)$
\begin{equation}\label{eoe}
%\|e(t)\|_E \leq \|e(0)\|_E  + C \ep^{\frac{3-n}{4}}.
\|(u^\epsilon -u)(t, \cdot)\|_{E} \leq C \ep^{\frac{1}{2} +\frac{1-n}{4}}
\end{equation}
for $t\in [0, T]$.
\end{thm}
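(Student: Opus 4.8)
The plan is to combine the three ingredients already assembled: the stability estimate of Lemma~\ref{lem31}, the initial-error bound $\|u^\ep(0,\cdot)-u(0,\cdot)\|_E\lesssim\ep^{1/2}$ established at the close of Section~4, and the evolution-error bound of Theorem~\ref{3.3}. The arithmetic to keep in mind throughout is $\frac{1}{2}+\frac{1-n}{4}=\frac{3-n}{4}$, the target exponent; I will show that each of the two terms on the right-hand side of Lemma~\ref{lem31} contributes at exactly this order.

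First I would apply Lemma~\ref{lem31} to $e=u^\ep-u$, giving
$$
\|e(t)\|_E \leq \|e(0)\|_E + \ep\int_0^t \left\|c^{-1}P[u^\ep]\right\|_{L^2}\,d\tau.
$$
For the initial term, the stated choices $S^\pm(0;x)=S_{\rm in}(x)$, $M^\pm(0;x)=\partial_x^2 S_{\rm in}(x)+i\beta I$, $A^\pm(0;x)$ as in \eqref{apm}, together with $Z(n,\ep)=(\beta/2\pi\ep)^{n/2}$, are exactly the hypotheses under which the Section~4 bound was derived, so $\|e(0)\|_E\lesssim\ep^{1/2}$. Since $\tfrac12\geq\tfrac{3-n}{4}$ for every $n\geq1$ and $0<\ep<1$, we have $\ep^{1/2}\lesssim\ep^{(3-n)/4}$, so the initial error is absorbed into the claimed bound.

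For the integral term I would invoke Theorem~\ref{3.3}, which under these same hypotheses gives $\|P[u^\ep](\tau,\cdot)\|_{L^2}\lesssim\ep^{-(1+n)/4}$ uniformly for $\tau\in[0,T]$. To pass from this to $\|c^{-1}P[u^\ep]\|_{L^2}$ I would use that, over the finite interval $[0,T]$, the beam centers $x(\tau,x_0)$ with $x_0\in I(0)$ remain in a fixed compact set, so the mass of $P[u^\ep]$ is concentrated, up to exponentially small Gaussian tails, on a fixed compact neighborhood on which the positive smooth coefficient $c$ is bounded below; hence $\|c^{-1}P[u^\ep]\|_{L^2}\lesssim\|P[u^\ep]\|_{L^2}\lesssim\ep^{-(1+n)/4}$. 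Integrating over $[0,t]\subset[0,T]$ and multiplying by $\ep$ yields
$$
\ep\int_0^t\left\|c^{-1}P[u^\ep]\right\|_{L^2}\,d\tau \lesssim T\,\ep^{\,1-(1+n)/4}=T\,\ep^{(3-n)/4}.
$$
Summing the two contributions gives $\|e(t)\|_E\lesssim\ep^{(3-n)/4}=\ep^{1/2+(1-n)/4}$, which is \eqref{eoe}.

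Since the substantive analysis has already been carried out in Sections~4 and~5, the combination itself is routine, and the exponent bookkeeping—checking that both $1-(1+n)/4$ and the initial exponent $1/2$ dominate $(3-n)/4$—is immediate. The one step I would treat with care is controlling $\|c^{-1}P[u^\ep]\|_{L^2}$ rather than $\|P[u^\ep]\|_{L^2}$: because the beams are only concentrated, not compactly supported, one must verify that the contribution of the Gaussian tails, weighted by $c^{-1}$, is negligible, which requires $c^{-1}$ to grow no faster than exponentially (automatic under the standing positivity of $c$, and harmless against the $e^{-c|y-x|^2/\ep}$ decay). I would also confirm that the implied constant in Theorem~\ref{3.3} is uniform in $\tau$ up to $\tau=T$, which follows from the smooth dependence of the beam data $(\tilde S,\tilde M,\tilde A)$ on $\tau$ over the compact time interval.
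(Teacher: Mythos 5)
Your proposal is correct and follows exactly the route the paper takes: the paper derives Theorem~\ref{thm4.3} by combining Lemma~\ref{lem31} with the initial-error bound $\|e(0)\|_E\lesssim\ep^{1/2}$ from Section~4 and the evolution bound $\|P[u^\ep]\|_{L^2}\lesssim\ep^{-(1+n)/4}$ of Theorem~\ref{3.3}, with the same exponent bookkeeping $\ep\cdot\ep^{-(1+n)/4}=\ep^{(3-n)/4}$ and $\ep^{1/2}\lesssim\ep^{(3-n)/4}$. Your extra care about the factor $c^{-1}$ and the uniformity in $\tau$ is a reasonable addition that the paper leaves implicit.
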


\section{An example}
Consider the initial value problem in $\Bbb R^3$ for $\partial_t^2 u - \Delta
u=0$ with initial data
$$u(0, x)= e^{i|x|/\ep}{f(|x|)\over |x|},\hbox{ and } u_t(0, x)=0,$$
where $f(s)\in C_0^\infty(0, \infty)$. Setting $g(s)=f(s)\exp(is/\epsilon)$ for
$s>0$, we extend $g(s)$ to be odd on $\Bbb R$, i.e.
$$g(-|x|)=-f(|x|)\exp(i|x|/\ep).$$ This problem has the exact solution
$$u(t, x)={1\over |x|}(g(t+|x|)-g(t-|x|)).$$
At $x=0$ this solution has a caustic of the
maximum possible strength, since all rays
starting inward from the sphere $|x|=a$ arrive at
$x=0$ when $t=a$. This is reflected in the behavior of the exact solution
$$u(0,t)=g^\prime(t)=(if(t)/\epsilon+f^\prime(t))e^{i t/\epsilon},$$
which grows like $\epsilon^{-1}$ as $\epsilon$ goes to zero.

To build a Gaussian beam approximation for this we need
$$u_{GB}(t, x)={1\over 2}\left({\beta\over 2\pi \ep}\right)^{3/2}\int_{\Bbb
R^3}A^+(t,y)e^{i\Phi^+(t, x; y)/\ep}+A^-(t,y)e^{i\Phi^-(t, x;y)/\ep}dy,$$
where $A^\pm(0,y)=f(|y|)/|y|$ and
$$
\Phi^\pm(0, x; y)=|y|+(x-y)\cdot
 p(y)+(x-y)\cdot\left( \frac{1}{|y|}(I-P(y)+i\beta I\right)(x-y)/2,
$$ where $p(y)=y/|y|$ and $P(y)$
is the orthogonal projection on the span of $p(y)$. We also want
$\Phi^+_t(0,x;y)=-\Phi^-_t(0,x;y)$, so that $\partial_t u(0, x)=0$ and
$A^\pm(t,y)\exp(ik\Phi^\pm(t, x; y))$ must be a lowest order Gaussian
beams concentrated on the null bi-characteristics for $\tau\pm
|\xi|$. With these definitions we have
$$
\Phi^\pm(t, x; y)=|y|+(x-x^\pm(t,y))\cdot p(y)+$$
$$
{1\over
2}(x-x^\pm(t,y))\cdot[i\beta P(y)+{1+i\beta |y| \over |y| \pm
t(1+i\beta|y|)}(I-P(y))](x-x^\pm(t,y)),$$ where $x^\pm(t,x;y)=y\pm tp(y)$.
For the amplitudes we have
$$
A^\pm(t, x\pm t p(y))=(1\pm t(1+i\beta ))^{-1}A(0; x).
$$
%So, in principle, one can evaluate the Gaussian superposition and compare it %with the exact solution.
Evaluating $u_{GB}(t,x)$ analytically looks
difficult, but for $u_{GB}(t,0)$ one has for
$t>0$
$$u_{GB}(t,0)=u(t,0)+o(1/\epsilon).$$
This behavior is predicted by the basic result
that, like Fourier integral operators, Gaussian
beam superpositions give accurate leading order
terms in asymptotic expansions.

In principle, one can evaluate the Gaussian beam
superposition and compare it with the exact
solution. Doing this numerically could lead to
interesting results on the accuracy of these
superpositions.

\section{Higher order Approximations}
The accuracy of the phase space based Gaussian
beam superposition depends on accuracy of the
individual Gaussian beam {\it Ansatz}. Gaussian
beams can be constructed to satisfy the wave
equation modulo errors of order $\ep^N$, for
arbitrary $N$, by computing higher order terms in
the spatial Taylor series for the phase and
amplitude about the central ray. If we refer the
construction in previous sections as the first
order GB solution, then a $k^{th}$ order GB
solution will include the Taylor series up to
$(k+1)^{th}$ order for the phase, and
$(k-1-2l)^{th}$ order for the $l^{th}$ amplitude
$A_l$ for $l=0, \cdots, \left[
\frac{k-1}{2}\right]$. The equations for these
phase and amplitude Taylor coefficients are
derived recursively, starting with the phase and
then progressing through the amplitudes. At each
stage (phase function, leading amplitude, next
amplitude ...) one has to derive the Taylor
series up to sufficiently high order before
passing to the next function in the expansion.

Let $X=X^\pm(t; X_0)$, with $x=x^\pm(t;X_0)$, denote the
bicharacteristic  at time $t>0$, which originates from $X_0$.
Following \cite{Tan08, LR:2009} we define the $k^{th}$ order
Gaussian beams as follows
\begin{equation}\label{kgb+}
u^\pm_{kGB}(t, y; X_0)=\rho(y-x)\left[ \sum_{l=0}^{\lfloor \frac{k-1}{2}\rfloor }
\ep^l T_{k-1-2l}^{x}[A_l^\pm](y) \right] \exp\left(\frac{i}{\epsilon}T_{k+1}^{x}[\Phi^\pm](y)\right),
\end{equation}
where $T_k^x[f](y)$ is the $k^{th}$ order Taylor polynomial of $f$ about $x$ evaluated at $y$, and $\rho$ is a cut-off function such that on its support the Taylor expansion of $\Phi^\pm$ still has a positive imaginary part.

By invoking the volume preserving map $X=X^\pm(t, X_0)$ and its inverse map denoted by $X_0=X_0^\pm(t, X)$,  we obtain a phase space based $k^{th}$ order Gaussian beam Ansatz
$$
u^\pm_{kPGB}(t, y, X):=u^\pm_{kGB}(t, y; X_0(t, X)).
$$
Proceeding as previously, we form the superpositions.
\begin{equation}\label{kegb+}
u_k^\epsilon(t, y)=Z(n,\epsilon)\left[ \int_{\Omega^+(t)}u^+_{kPGB}\delta(w^+)dX +\int_{\Omega^-(t)}u^-_{kPGB}\delta(w^-)dX \right],
\end{equation}
where $
\Omega(t)=X(t, \Omega(0)),
$
and $w^\pm(t, X)$ is the solution of the Liouville equation with $H=\pm c(x)|p|$
 subject to $w^\pm(0, X)=p-\nabla_x S_{\rm in}(x)$.

 In (\ref{kgb+}) the initial data for the
 amplitudes $A^\pm_l$ must be chosen consistently
 with the initial data (\ref{ini}). This leads to
 the recursion relations
 \begin{align}\label{all}   A_l^+(0, x)+ A_l^-(0, x)&=A_{\rm
 in}^{(l)}\\ \notag
\partial_t A^+_{l-1}(0,x)+ \partial_t A^-_{l-1}&
(0,x) -i(A^+_l(0,x)-A^-_l(0,x))c(x)|\nabla S_{\rm
in}(x)|=B_{\rm in}^{(l-1)}(x).
\end{align}
  Note that, since this recursion involves the
 initial
 time derivatives of the amplitudes, it becomes quite complicated as $l$ increases.

This gives a $k^{th}$ order asymptotic solution of the wave equation. More precisely, we have the following theorem.
\begin{thm}\label{3.3} Let $P$ be the linear wave operator of the form
$P=\partial^2_t -c^2 \Delta $, and $u^\ep$ is defined in (\ref{kegb+})
with $Im(M^\pm_{\rm in})=\beta I$ and $Z(n, \epsilon)=(\beta/(2\pi\ep))^{n/2}$,
$\beta>0$, then $u_k^\epsilon$ is an asymptotic solution and satisfies
\begin{equation}\label{pk+}
   \|P[u_k^\ep](t, \cdot)\|_{L_y^2} \lesssim
\ep^{\frac{k}{2}-1+\frac{1-n}{4}}.
\end{equation}
\end{thm}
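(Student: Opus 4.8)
The plan is to follow the proof of the first-order estimate (\ref{pk}) line by line, replacing the single-term amplitude by the full truncated amplitude of the $k$-th order beam and bookkeeping the vanishing orders that the construction produces. First I would collapse the phase-space representation (\ref{kegb+}) to a Lagrangian superposition exactly as before: the volume-preserving flow $X=X^\pm(t,X_0)$ together with the identity $w^\pm(t,X^\pm(t,X_0))=p_0-\nabla_x S_{\rm in}(x_0)$ turns the two integrals over $\Omega^\pm(t)$ carrying the factors $\delta(w^\pm)$ into a single integral over $I(0)$, so that
\begin{equation*}
u_k^\ep(t,y)=Z(n,\ep)\int_{I(0)}u_{kGB}(t,y;x_0)\,dx_0,\qquad P[u_k^\ep]=Z(n,\ep)\int_{I(0)}P[u_{kGB}]\,dx_0.
\end{equation*}
Since each $u_{kGB}(t,y;x_0)$ is by construction an asymptotic solution of $Pu=0$, so is the superposition $u_k^\ep$.

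Next I would compute $P[u_{kGB}]$ with $u_{kGB}=\rho(y-x)\,A\,e^{i\Phi/\ep}$, where $A=\sum_l\ep^l T_{k-1-2l}^{x}[A_l]$ and $\Phi=T_{k+1}^{x}[\Phi]$. The terms in which a derivative lands on the cutoff $\rho$ are supported in the region $\rho'\neq 0$ away from the central ray, where $Im(\Phi)$ is bounded below by a positive constant; these contributions are $O(e^{-c/\ep})$ and may be dropped. On $\mathrm{supp}\,\rho$ the choice of $\rho$ guarantees $Im(\Phi)\ge c|y-x|^2$, which is all the Gaussian estimates require. The surviving terms reproduce the geometric-optics expansion $P[Ae^{i\Phi/\ep}]=e^{i\Phi/\ep}\sum_j c_j\ep^j$, with $c_{-2}=-GA_0$, $c_{-1}=2iLA_0+GA_1$ and the higher $c_j$ given by the recursion near (\ref{pap}).

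The heart of the matter is the vanishing order of each $c_j$ on $\gamma$. The defining feature of the $k$-th order construction---phase Taylor polynomial of order $k+1$ and $l$-th amplitude of order $k-1-2l$---is precisely arranged (as in \cite{Ralston82,Tan08,LR:2009}) so that the eikonal defect satisfies $|G|\le C|y-x|^{k+2}$ and each transport/source combination is cancelled to the maximal order, giving
\begin{equation*}
|c_j(t,y)|\le C|A|\,|y-x(t,x_0)|^{\,m_j},\qquad m_j:=\max(k-2-2j,\,0).
\end{equation*}
I would verify this by induction on $j$, using the equations for $S$, $M$ and the $A_l$ to annihilate the Taylor coefficients of $c_j$ up to the prescribed order. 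I expect this combinatorial bookkeeping to be the only delicate point; everything downstream is a direct transcription of the first-order case.

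Finally I would estimate $\|P[u_k^\ep]\|_{L^2_y}$ term by term using the key estimate (\ref{Tf}), applied with the exponent $k$ there replaced by the vanishing order $m_j$ of $c_j$. For every $j$ with $m_j=k-2-2j\ge 0$ this yields a contribution of size $\ep^{\,j}\cdot\ep^{\,m_j/2+(1+n)/4}=\ep^{\,k/2-1+(1+n)/4}$, independent of $j$, while the finitely many terms with $m_j=0$ carry a strictly larger power of $\ep$ and are negligible. Summing these finitely many contributions and multiplying by $Z(n,\ep)\sim\ep^{-n/2}$ gives
\begin{equation*}
\|P[u_k^\ep](t,\cdot)\|_{L^2_y}\lesssim \ep^{-n/2}\,\ep^{\,k/2-1+(1+n)/4}=\ep^{\,k/2-1+(1-n)/4},
\end{equation*}
which is the claimed bound.
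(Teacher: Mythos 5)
Your proposal is correct and follows essentially the same route as the paper's own proof: collapse the phase-space superposition with $\delta(w^\pm)$ back to a Lagrangian integral over $I(0)$ via the volume-preserving flow, expand $P[u_{kGB}]$ into the geometric-optics coefficients $c_j$ with the Taylor-truncated phase and amplitudes, record the vanishing orders $|c_{-2}|\lesssim|y-x|^{k+2}$ and $|c_j|\lesssim|y-x|^{k-2-2j}$, and apply the Schur-lemma estimate (\ref{Tf}) term by term before multiplying by $Z\sim\ep^{-n/2}$. Your explicit treatment of the terms where derivatives hit the cutoff $\rho$ (exponentially small off the central ray) and the $\max(k-2-2j,0)$ bookkeeping are slight elaborations of points the paper dispatches in a single remark, but they do not constitute a different argument.
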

\begin{proof}For notational convenience we estimate only one of two Gaussian beams
with $\pm$ index omitted:
\begin{align*}
u_k^\epsilon(t, y)=Z(n,\epsilon)\int_{I(0)}u_{kGB}(t,
y; x_0)dx_0.
\end{align*}
According to the GB construction, $u_{kGB}(t,
y; x_0)$ are asymptotic solutions for each $x_0$, so will be their superpositions $u_k^\epsilon(t, y)$.
It remains to verify (\ref{pk+}).  First we see that
$$
P[u_k^\epsilon(t, y)]=Z(n,\epsilon)\int_{I(0)}P[u_{kGB}(t,
y; x_0)]dx_0.
$$
Using (\ref{pap}) in Section 2 with $A$ replaced by $\rho(y-x)\left[ \sum_{l=0}^{\lfloor \frac{k-1}{2}\rfloor }
\ep^l T_{k-1-2l}^{x}[A_l](y) \right]$ and $\Phi$ by
$T_{k+1}^x[\Phi](y)$, we have

\begin{align*}
c_{-2}(t, y) & = - \tilde G \rho(y-x)
T_{k-1}^x[A_0](y),\\
c_{-1}(t, y) &= 2iL \left[\rho T_{k-1}^x[A_0](y) \right]+\tilde G T_{k-3}^x[A_1](y),\\
c_{l}(t, y)= &2iL \left[\rho T_{k-3-2l}^x[A_{l+1}](y) ]\right] +\tilde G \rho T_{k-5-2l}^x[A_{l+2}](y)+P[\rho T_{k-1-2l}^x[A_{l}](y)], \quad l=0, 1, \cdots,
\end{align*}
where $\tilde G=[(\partial_t T_{k+1}^x[\Phi](y))^2 -c^2 (\nabla_y T_{k+1}^x[\Phi](y))^2]$. Using $T_{k+1}^x[\Phi](y)=\Phi(y)+R_{k+1}^x[\Phi](y)$, here $R^x_{k+1}$ denotes the remainder of the Taylor expansion,  and $\tilde G(t, y)=O(|y-x|^{k+2})$ we can see that
$$
|c_{-2}(t, y)|\leq C|y-x|^{k+2}.
$$
Also using the construction for $A_l$ and their derivatives, we are able to show
$$
|c_{l}(t, y)|\leq C|y-x|^{k-2-2l},
$$
where we have used the fact that differentiation of $\rho$ vanishes in a neighborhood of $x$.   The use of the cut-off function ensures that we can always choose a small neighborhood of $x(t, x_0)$ so that
$$
Im(T_{k+1}^x[\Phi](y))\geq c |y-x|^2.
$$
Consequently,
\begin{align*}
 Z^{-1} \|P[u^\epsilon(t, \cdot)]\|_{L^2} &
 \leq  \left \|\int_{I(0)} A e^{-Im(T_{k+1}^x[\Phi](y))/\epsilon}
\left|\ep^{-2} c_{-2} +\ep^{-1} c_{-1}  + c_0 +\cdots \right| dx_0 \right\|_{L^2_y} \\
& \leq \sum_{j=-2}^{\lfloor \frac{k-1}{2}\rfloor-2}
\ep^{j}  \left \| \int_{I(0)} |A||c_j| e^{-c|y-x(t, x_0)|^2/\epsilon} dx_0 \right\|_{L^2_y},
\end{align*}
continuing the estimate by using  the key estimate (\ref{Tf})
\begin{align*}
&  \lesssim \left[\ep^{-2} \ep^{k/2+1} +\ep^{-1}\cdot \ep ^{k/2}+\cdots  \right]\ep^{(1+n)/4}  \\
&  \lesssim \ep^{k/2-1 +(1+n)/4},
\end{align*}
which when using $Z\sim \ep^{-n/2}$ proves the result.
\end{proof}
In order to obtain an estimate of $\|(u_k^\ep -u)(t, \cdot)\|_E $ for any $ 0\leq t\leq T$, all that remains to verify is that  the superposition (\ref{kegb}) accurately approximates
 the initial data. However, using the recursion (\ref{all}) to determine the amplitudes,
 this is again an application of \cite{Tan08}.
which shows that the initial error in energy norm
is bounded by $\ep^{k/2}$ for $k>1$.  Thus our
main result for $k^{th}$ order phase space GB
superposition is as follows.
\begin{thm}
Given $T>0$, and let $u$ be the solution of the wave equation subject to the initial data $(u, u_t)(0, x)=(A_{\rm in}, B_{\rm in})e^{iS_{\rm in}(x)/\ep}$, and $u^\epsilon$
be the $k^{th}$ order approximation defined in (\ref{kegb+}) with initial data chosen as described above with $|supp(A_{\rm in})|+|supp(B_{\rm in})|< \infty$.
Then there exists $\epsilon_0>0$, a normalization parameter $Z(n,
\epsilon)\sim \ep^{-n/2}$, and a constant $C$ such that for all $\epsilon \in (0,
\epsilon_0)$
$$
\|(u^\epsilon -u)(t, \cdot)\|_E \leq  C \ep^{\frac{k}{2}+\frac{1-n}{4}}
$$
for $t\in [0, T]$.
\end{thm}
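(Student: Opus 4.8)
The plan is to combine the stability estimate of Lemma~\ref{lem31} with the two ingredients already established: the $k^{th}$-order consistency (evolution) bound (\ref{pk+}) and a bound for the initial error in the energy norm. Writing $e=u^\ep-u$, Lemma~\ref{lem31} gives
$$
\|e(t)\|_E \leq \|e(0)\|_E + \ep\int_0^t \big\|c^{-1}P[u^\ep](\tau,\cdot)\big\|_{L^2}\,d\tau,
$$
so it suffices to estimate $\|e(0)\|_E$ and the time integral of $\ep\|c^{-1}P[u^\ep]\|_{L^2}$ separately; the stated rate then follows by adding the two contributions.

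First I would dispose of the evolution term. Since $c$ is smooth and positive and, by finite propagation speed, $e$ is supported in a fixed compact set of $x$ for $t\in[0,T]$, the weight $c^{-1}$ is bounded there, so $\|c^{-1}P[u^\ep]\|_{L^2}\lesssim \|P[u^\ep]\|_{L^2}$. The consistency estimate (\ref{pk+}) gives $\|P[u_k^\ep](\tau,\cdot)\|_{L^2}\lesssim \ep^{\frac{k}{2}-1+\frac{1-n}{4}}$ uniformly in $\tau\in[0,T]$. Multiplying by the prefactor $\ep$ raises the exponent by one, and integrating over $[0,t]\subseteq[0,T]$ contributes at most a factor $T$, so that
$$
\ep\int_0^t \big\|c^{-1}P[u^\ep]\big\|_{L^2}\,d\tau \;\lesssim\; T\,\ep^{\frac{k}{2}+\frac{1-n}{4}}.
$$

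Next I would bound the initial error $\|e(0)\|_E\lesssim \ep^{k/2}$. By definition of the energy norm this splits into $\ep\|u^\ep(0,\cdot)-u(0,\cdot)\|_{H^1}$ and $\ep\|\partial_t u^\ep(0,\cdot)-\partial_t u(0,\cdot)\|_{L^2}$. The first is handled by a Taylor-remainder argument of the type in Lemma~\ref{tan}: on the region $|x_0-y|<\ep^{1/3}$ the phase and amplitude are replaced by their Taylor polynomials with errors gaining powers of $\ep$, while the complementary region contributes an exponentially small term. The delicate piece is the time derivative, and I expect it to be the main obstacle: differentiating $e^{i\Phi/\ep}$ produces an $\ep^{-1}$ factor, so one must verify that the recursion relations (\ref{all}) for the amplitude data are precisely what is needed to make the leading $\ep^{-1}$ singularity in $\partial_t u^\ep(0,\cdot)$ match $B_{\rm in}e^{iS_{\rm in}/\ep}$, with the successive matching conditions cancelling the next orders so that the residual is $O(\ep^{k/2})$ in $L^2$ after the $\ep$ weight. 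This matching, carried out for $k=1$ in Section~4 and reduced to \cite{Tan08} in general, couples the amplitudes and their initial time derivatives across orders and must be propagated consistently up to order $k$.

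Combining the two bounds yields $\|e(t)\|_E\lesssim \ep^{k/2}+\ep^{\frac{k}{2}+\frac{1-n}{4}}$. Since $n\geq 1$ forces $\frac{1-n}{4}\leq 0$, the second term dominates as $\ep\downarrow 0$, so $\|e(t)\|_E\lesssim \ep^{\frac{k}{2}+\frac{1-n}{4}}$ uniformly for $t\in[0,T]$, where $\ep_0$ is chosen small enough that on the support of the cut-off $\rho$ the Taylor-expanded phase retains positive imaginary part and the Taylor approximations are valid. This is the asserted estimate.
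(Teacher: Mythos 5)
Your proposal follows essentially the same route as the paper: the theorem is obtained by combining the energy stability estimate of Lemma~\ref{lem31} with the evolution bound (\ref{pk+}) (which gains a factor of $\ep$ from the prefactor in the Duhamel-type term) and the initial-data matching via the recursion (\ref{all}), which the paper likewise reduces to the construction in \cite{Tan08} to get an $O(\ep^{k/2})$ initial error in the energy norm. Your observation that the evolution term dominates since $\frac{1-n}{4}\leq 0$ matches the paper's conclusion, so the argument is correct and in line with the intended proof.
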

\noindent{\bf Remarks}
\begin{itemize}
\item Due to the property of symmetry in time, all results obtained apply to $|t|\leq T$.
\item For higher order constructions, the Liouville equation for higher order GB components can be given similarly to those for the first order GB method.

 \item For computation of high order derivatives of the phase through level set functions we refer to \cite{LR:2009} for details.
\end{itemize}

\bigskip
\section*{Acknowledgments} Liu's research was partially supported by the National Science Foundation under the Kinetic FRG Grant DMS07-57227. He also wants to thank the IPAM for the hospitality and support during his staying for
the program on ``{\it Quantum and Kinetic Transport: Analysis, Computations, and New Applications}",  March 9--June 12, 2009.

\bigskip

\end{document}